\newcounter{dummy}
\newtheorem{theorem}[dummy]{Theorem}
\newtheorem*{theorem*}{Theorem}
\newtheorem{lemma}[dummy]{Lemma}
\newtheorem{cor}[dummy]{Corollary}
\newtheorem*{cor*}{Corollary}
\newtheorem{prop}[dummy]{Proposition}
\newtheorem*{defn}{Definition}
\theoremstyle{remark}
\newtheorem*{remark*}{Remark}
\numberwithin{equation}{section}
\DeclareMathOperator{\Lk}{Lk}
\begin{document}
\title{The $e$-vector of a simplicial complex}
\author{Wayne A. Johnson\footnote{Department of Mathematics, Truman State University, Kirksville, MO} \and Wiktor J. Mogilski\footnote{Department of Mathematical Sciences, Utah Valley University, Orem, UT}}

\maketitle

\begin{abstract}
We study the exponential Hilbert series (both coarsely- and finely-graded) of the Stanley-Reisner ring of an abstract simplicial complex, $\Delta$, and we introduce the $e$-vector of $\Delta$, which relates to the coefficients of the exponential Hilbert series. We explore the relationship of the $e$-vector with the classical $f$-vector and $h$-vector of $\Delta$. We then prove a simple combinatorial identity that explicitly computes the $e$-vector in the case where $\Delta$ is an Eulerian manifold. This identity leads to an elementary proof of the classical Dehn-Sommerville relations. We conclude by applying the $e$-vector to the computation of the dimension of certain probability models.
\vspace{1 pc}
\\
\noindent \emph{Keywords}: simplicial complex; Hilbert series; Stanley-Reisner ring; Eulerian manifold; hierarchical model
\end{abstract}

\section{Introduction}

The inspiration for this article comes from work done by the first author in \cite{Joh}. In the same vein, we study the exponential variant of the classical Hilbert series of a graded $S$-module, where $S=k[x_1,\dots,x_n]$ is the polynomial ring (throughout the paper, $S$ will always denote this polynomial ring).

Recall that for any $\mathbb{N}^n$-graded $S$-module, $M$, we define the \emph{Hilbert series} of $M$ to be the formal power series
\begin{center}
$HS(M;x_1,\dots,x_n)=\displaystyle\sum_{\textbf{a}\in\mathbb{N}^n}\dim_k(M_\textbf{a})\textbf{x}^\textbf{a}$,
\end{center}
where, if $\textbf{a}=(a_1,\dots,a_n)$, we define $\textbf{x}^\textbf{a}=x_1^{a_1}x_2^{a_2}\dots x_n^{a_n}$. Note that the formal power series $HS(M;x_1,\dots,x_n)$ is often called the \textit{fine} or \textit{finely-graded} Hilbert series of $M$. Setting each $x_i=t$, we obtain a formal power series in one variable, $HS(M;t)$, which is often called the \textit{coarse} or \textit{coarsely-graded} Hilbert series of $M$.

While the Hilbert series has been extensively studied in various contexts, we will be concerned with an analogue of $HS(M;x_1,\dots,x_n)$ that has been studied considerably less thoroughly. As in \cite{Joh}, define the \textit{exponential Hilbert series} of $M$ to be the formal power series
\begin{center}
$E(M;x_1,\dots,x_n)=\displaystyle\sum_{\textbf{a}\in\mathbb{N}^n}\dim_k(M_\textbf{a})\frac{\textbf{x}^\textbf{a}}{\textbf{a}!}$,
\end{center}
where $\textbf{a}!:=a_1!a_2!\cdot\cdot\cdot a_n!$. In similar fashion to $HS(M;x_1,\dots,x_n)$, we consider the exponential series in fine and coarse flavors. If we wish to equate the variables to obtain the coarse exponential series, we denote this by $E(M;t)$.

The featured graded $S$-module in this article is the \textit{Stanley-Reisner ring}. Let $\Delta$ be an abstract simplicial complex. Recall (see, for example, \cite{MiS}) that the \textit{Stanley-Reisner ideal} of $\Delta$, denoted $I_\Delta$ is defined to be the ideal generated by the monomials corresponding to the non-faces of $\Delta$. The \textit{Stanley-Reisner ring} is defined to be the quotient ring $S/I_\Delta$, where $\{1,2,\dots,n\}$ is the vertex set of $\Delta$. As any ideal generated by monomials will automatically be graded, the Stanley-Reisner ring inherits an $\mathbb{N}^n$-gradation from $S$.

The algebraic and combinatorial properties of the Stanley-Reisner ring of a simplicial complex have been studied extensively (see \cite{MiS}, \cite{Stan96} for broad treatments). Our first main contribution is the explicit computation of the fine exponential series of $S/I_\Delta$.

\begin{theorem*}[Theorem 2]
Let $\Delta$ be an abstract simplicial complex with Stanley-Reisner ideal $I_\Delta$. Then
\begin{center}
$E(S/I_\Delta;x_1,\dots,x_n)=\displaystyle\sum_{\sigma\in\Delta}\prod_{i\in\sigma}\left(e^{x_i}-1\right)$.
\end{center}
\end{theorem*}

Setting $x_i=t$ for all $i$, the coarse exponential series simply becomes a polynomial in $e^t$:
$$
E(S/I_\Delta;t)=\sum_{\sigma\in\Delta}(e^t-1)^{\dim(\sigma)+1}=e_0+e_1e^t+e_2e^{2t}+\dots+e_de^{dt}.
$$

We call the vector $(e_0,e_1,\dots,e_d)$ consisting of the coefficients of the above the $e$-\textit{vector} of $\Delta$.

In contrast, the ordinary coarse Hilbert series is computed to be (see \cite{MiS} or \cite{Stan96}):

\begin{center}
$HS(S/I_\Delta;t)=\displaystyle\frac{K(t)}{(1-t)^d}$,
\end{center}
where $K(t)$ is a polynomial (of degree $d$) with integer coefficients. If we write the polynomial as
\begin{center}
$K(t)=h_0+h_1t+h_2t^2+\dots+h_dt^d$,
\end{center}
then the vector $(h_0,h_1,\dots,h_d)$ consisting of the coefficients of $K(t)$ is called the $h$-\emph{vector} of $\Delta$, which is a classical invariant of $\Delta$.

Associated to an abstract simplicial complex there is one more classical invariant. The $f$-\emph{vector}, also called the \textit{face vector} of $\Delta$ is usually written as $(f_{-1},f_0,\dots,f_{d-1})$, where $f_i$ give the number of $i$-faces of $\Delta$. For algebraic purposes it is common to count the empty set, and hence we set $f_{-1}=1$.

The relationship between the $f$-vector and the $h$-vector was a topic of extensive study in the 1960s, 70s, and 80s, arguably culminating in the proofs of the Upper Bound Conjecture in \cite{Stan75} of Stanley, and the $g$-Theorem in \cite{Stan80} of Stanley and \cite{BL80} \cite{BL81} of Billera and Lee. The results of these papers and other useful results pertaining to the $f$-vector and the $h$-vector are collected in Chapter II of \cite{Stan96}. For a more modern treatment, including a survey of recent results pertaining to the $f$-vector and $h$-vector, see the survey by Klee and Novik (\cite{KN}). The relationship between all of the above vectors is presented in equations (2.5) and (2.6) below.

A part of the $g$-Theorem guarantees that the $h$-vector is palindromic (satisfying $h_k=h_{d-k}$) whenever $\Delta$ is the boundary complex of a simplicial polytope (or more generally, an Eulerian sphere). These equations are referred to as the \emph{Dehn-Sommerville relations}. In \cite{Stan96}, it is shown that these equations relate to the intersection cohomology of a certain toric variety associated to the polytope. Thus the $h$-vector is not only more efficient, but also encodes more data about $\Delta$.

Our main application is the surprising fact that the Dehn-Sommerville relations have a very simple characterization in terms of the $e$-vector. We say that a $(d-1)$-dimensional simplicial complex $\Delta$ has \textit{Property E} if for every $0\leq k\leq d$ we have that $e_k=(-1)^{d-k} f_{k-1}$ (in other words, the $e$-vector is an alternating version of the $f$-vector). If these equalities are just required to hold for $1\leq k\leq d$, then we say that $\Delta$ has \textit{weak Property E}.

\begin{theorem*}[Theorem 3]
A $(d-1)$-simplicial complex $\Delta$ has weak Property E if and only if for every $0\leq k\leq d$ we have that $$h_k-h_{d-k}=(-1)^k {d\choose k}\left(\tilde{\chi}(S^{d-1})-\tilde{\chi}(\Delta) \right).$$

In particular, $\Delta$ has Property E if and only if for every $0\leq k\leq d$ we have that $h_k=h_{d-k}$.
\end{theorem*}

This characterization using the $e$-vector has the following nice applications. First, it leads to an elementary proof of the Dehn-Sommerville relations for Eulerian manifolds.

\begin{theorem*}[Theorem 5]
If $\Delta$ is an Eulerian manifold then $\Delta$ has weak Property E. If $\Delta$ is further assumed to have odd dimension or is an Eulerian sphere, then $\Delta$ has Property E.
\end{theorem*}

Simply put, Eulerian manifolds are just simplicial complexes that are modelled on triangulations of manifolds. Theorem 5 gives us an explicit computation for the $e$-vectors of such complexes, and, in turn, explicit computation of the exponential Hilbert series. For our second application, we are able to easily produce examples of non-Eulerian complexes satisfying Property E in every dimension, thus showing that the converse of Theorem 5 is false. Hence, via Theorem 3, we produce a class of non-Eulerian examples satisfying the Dehn-Sommerville equations. We then consider a lesser known version of the Dehn-Sommerville relations for Eulerian manifolds with boundary and provide an equivalent formulation using the $e$-vector (Theorem 8).

We conclude with an application of the $e$-vector to algebraic statistics, where it is used to compute the dimension of a hierarchical log-linear model. This dimension was original computed by Ho\c{s}ten and Sullivant in \cite{HS}. The connection to the exponential Hilbert series of the complex that describes the independence relations in the model is new.

\section{Exponential Hilbert series and the $e$-vector}

We begin with a broader discussion of the exponential Hilbert series of certain $S$-modules. First, note that if $M=S$, we have
\begin{center}
$E(S;x_1,\dots,x_n)=e^{x_1}e^{x_2}\dots e^{x_n}=\textbf{e}^\textbf{x}$.
\end{center}
The $S$-modules we will chiefly be concerned with are those generated by a set of fixed monomials. We begin with the simple case of the free $S$-module generated by $\textbf{x}^\textbf{a}$. Following the notation in \cite{MiS}, denote such a module by $S(-\textbf{a})$. In analogy to the ordinary Hilbert series, we have the following result.
\begin{prop}
The exponential Hilbert series of $S(-\textbf{a})$ has the form
\begin{center}
$E(S(-\textbf{a});x_1,\dots,x_n)=\displaystyle\prod_{i=1}^n\left(e^{x_i}-\sum_{k=0}^{a_i-1}\frac{x_i^k}{k!}\right)$.
\end{center}
\end{prop}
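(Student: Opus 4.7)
The plan is to unpack the definition of the exponential Hilbert series once the graded pieces of $S(-\mathbf{a})$ are identified, and then separate the resulting multi-index sum into a product of one-variable sums. This is essentially the exponential analogue of the standard computation that $HS(S(-\mathbf{a}); x_1, \ldots, x_n) = \mathbf{x}^{\mathbf{a}}/\prod(1-x_i)$.

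First, I would describe the graded structure of $S(-\mathbf{a})$. By definition, $S(-\mathbf{a})$ is the free rank-one $S$-module whose generator sits in multidegree $\mathbf{a}$. Hence $S(-\mathbf{a})_{\mathbf{b}}$ is one-dimensional when $\mathbf{b} \geq \mathbf{a}$ componentwise (i.e., $b_i \geq a_i$ for every $i$) and zero otherwise. Plugging this into the definition of $E$ gives
\begin{equation*}
E(S(-\mathbf{a}); x_1, \ldots, x_n) = \sum_{\substack{\mathbf{b} \in \mathbb{N}^n \\ \mathbf{b} \geq \mathbf{a}}} \frac{\mathbf{x}^{\mathbf{b}}}{\mathbf{b}!}.
\end{equation*}

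Next, because the constraint $\mathbf{b} \geq \mathbf{a}$ decouples across coordinates and the summand factors as $\prod_i x_i^{b_i}/b_i!$, the sum splits as a product:
\begin{equation*}
\sum_{\mathbf{b} \geq \mathbf{a}} \frac{\mathbf{x}^{\mathbf{b}}}{\mathbf{b}!} = \prod_{i=1}^n \left(\sum_{b_i = a_i}^{\infty} \frac{x_i^{b_i}}{b_i!}\right).
\end{equation*}
Finally, I would rewrite each one-variable tail of the exponential series as $e^{x_i}$ minus its first $a_i$ Taylor terms, yielding the claimed formula.

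There is no serious obstacle here; the only thing to be careful about is the indexing convention (that $S(-\mathbf{a})$ has its generator in degree $\mathbf{a}$, not $-\mathbf{a}$) and the elementary observation that membership in the shifted orthant $\{\mathbf{b} : \mathbf{b} \geq \mathbf{a}\}$ factors coordinatewise, which is exactly what lets the multi-index sum split into a product.
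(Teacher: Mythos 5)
Your proof is correct and takes essentially the same approach as the paper's: both unpack the definition of $E$ on the rank-one free module $S(-\mathbf{a})$ and factor the multi-index sum into a product of univariate truncated exponential series. The only cosmetic difference is that you sum directly over the shifted orthant $\mathbf{b}\geq\mathbf{a}$, whereas the paper reindexes over all of $\mathbb{N}^n$, factors out $\mathbf{x}^{\mathbf{a}}$, and then redistributes it across the product.
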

\begin{proof}
First, note that $S(-\textbf{a})=S\cdot\textbf{x}^\textbf{a}$, and we then have
\begin{center}
$E(S(-\textbf{a});x_1,\dots,x_n)=\displaystyle\sum_{\textbf{b}\in\mathbb{N}^n}\frac{\textbf{x}^{\textbf{a}+\textbf{b}}}{(\textbf{a}+\textbf{b})!}=\textbf{x}^\textbf{a}\sum_{\textbf{b}\in\mathbb{N}^n}\frac{\textbf{x}^{\textbf{b}}}{(\textbf{a}+\textbf{b})!}$.
\end{center}
In order to compute the series on the right, we first compute the univariate version of the series. If there is only one variable, then we have
\begin{center}
$\displaystyle\sum_{k\in\mathbb{N}}\frac{x^k}{(k+d)!}=\frac{e^x-\sum_{j=0}^{d-1}\frac{x^j}{j!}}{x^d}$.
\end{center}
With that in mind, our original series becomes
\begin{center}
$E(S(-\textbf{a});x_1,\dots,x_n)=\textbf{x}^\textbf{a}\displaystyle\prod_{i=1}^n\frac{e^{x_i}-\sum_{j=0}^{a_i-1}\frac{x_i^j}{j!}}{x_i^{a_i}}$,
\end{center}
and distributing the $\textbf{x}^\textbf{a}$ yields the result.
\end{proof}
The remainder of this section is devoted to expanding this result to the Stanley-Reisner ring, $S/I_\Delta$, of a simplicial complex, $\Delta$. While the argument in this case is similar to the above, the structure of $I_\Delta$ greatly simplifies the formula of the exponential Hilbert series, as the monomials are square-free.

\begin{theorem}
Let $\Delta$ be an abstract simplicial complex with Stanley-Reisner ideal $I_\Delta$. Then
\begin{center}
$E(S/I_\Delta;x_1,\dots,x_n)=\displaystyle\sum_{\sigma\in\Delta}\prod_{i\in\sigma}\left(e^{x_i}-1\right)$.
\end{center}
\end{theorem}

\begin{proof}
The proof is adapted from that of the ordinary Hilbert series found in \cite{MiS}. First, note that $\textbf{x}^\textbf{a}$ lies outside of $I_\Delta$ if and only if $\textbf{x}^{\text{supp}(\textbf{a})}$ lies outside of $I_\Delta$, where $\text{supp}(\textbf{a})=\{i\mid a_i\neq0\}$ is called the \textit{support} of $\textbf{a}$. Then, by the definition of the Stanley-Reisner ideal, the monomials that do not vanish in $S/I_\Delta$ are precisely those whose support is in $\Delta$. The result follows from the computation in Proposition 1 and the fact that the monomials $x^{\text{supp}(\textbf{a})}$ are square-free.
\end{proof}

This theorem begins to exhibit one of the chief differences between the ordinary Hilbert series and its exponential counterpart. In the ordinary case (see \cite{MiS}), the Hilbert series of a Stanley-Reisner ring can be written as
\begin{center}
$\displaystyle\frac{K(x_1,\dots,x_n)}{(1-x_1)(1-x_2)\dots(1-x_n)}$,
\end{center}
where $K(x_1,\dots,x_n)$ is a polynomial with integer coefficients (often called the $K$-\emph{polynomial} of $S/I_\Delta$). However, the exponential series relates to a different polynomial. Namely, the exponential Hilbert series of $S/I_\Delta$ is polynomial in the $e^{x_i}$. We may also note, as seen in Proposition 1, that the exponential Hilbert series of $S(-\textbf{a})$ is polynomial in the $x_i$ and the $e^{x_i}$. In either case, the exponential Hilbert series seems to converge to a polynomial outright, rather than in relation to a rational function.

We now turn our focus to the coarsely-graded exponential Hilbert series obtained by setting $x_i=t$ for all $i$. The computation of Theorem 2 implies that
\begin{equation}
E(S/I_\Delta;t)=\displaystyle\sum_{\sigma\in\Delta}(e^t-1)^{|\sigma|}=\sum_{\sigma\in\Delta}(e^t-1)^{\dim(\sigma)+1}.
\end{equation}
Note again that this is polynomial in the $e^t$ and the constant term is
\begin{equation}
1-f_0+f_1-f_2+\dots=-\chi(\Delta),
\end{equation}
where $\chi(\Delta)$ is the reduced Euler characteristic of $\Delta$. Note that the number of terms with power $|\sigma|$ is equal to the number of faces of $\Delta$ with dimension equal to that of $\sigma$, including the empty face. Hence we can rewrite (2.1) as follows:
\begin{equation}
E(S/I_\Delta;t)=\displaystyle\sum_{i=0}^d f_{i-1}(e^t-1)^i,
\end{equation}
where $d:=\dim(\Delta)+1$.

Upon expansion, we get a polynomial of degree $d$ in $e^t$:
\begin{equation}
E(S/I_\Delta;t)=e_0+e_1e^t+e_2e^{2t}+\dots+e_de^{dt}.
\end{equation}
We call the vector $(e_0,e_1,\dots,e_d)$ consisting of the coefficients of the polynomial in (2.4) the $e$-\textit{vector} of $\Delta$. We have already shown that $e_0=-\chi(\Delta)$. We next explore the remaining coefficients. Expanding $E(S/I_\Delta;t)$ yields
\begin{center}
$\displaystyle\sum_{i=0}^df_{i-1}(e^t-1)^i=\sum_{i=0}^df_{i-1}\sum_{j=0}^i(-1)^{i-j}{i\choose j}e^{jt}$.
\end{center}
We wish to isolate the coefficient of $e^{kt}$ for some fixed $k\in\{0,\dots,d\}$. The only values of $i$ which contribute a term to this coefficient are those where $i\geq k$. Therefore, the coefficient of $e^{kt}$ is:
\begin{equation}
e_k=\displaystyle\sum_{i=k}^d(-1)^{i-k}f_{i-1}{i\choose k}.
\end{equation}
Note that there is an analogous formula for the components of the $h$-vector of $\Delta$ (see \cite{Stan96}), given below:
\begin{equation}
h_k=\displaystyle\sum_{i=0}^k(-1)^{k-i}f_{i-1}{d-i\choose k-i}.
\end{equation}
\vspace{1pc}
\\
\noindent\textbf{Example:} Let
\begin{center}
$\Delta=\{\{1\},\{2\},\{3\},\{4\},\{1,2\},\{1,3\},\{2,3\},\{2,4\},\{3,4\},\{1,2,3\}\}$.
\end{center}
Then the $f$-vector is given by $(1,4,5,1)$. Using (2.5) and (2.6), we can easily compute the $e$-vector $(1,-3,2,1)$ and the $h$-vector $(1,1,0,-1)$.

\section{The $e$-vector and Property E}

We begin this section with a motivating example.
\vspace{1pc}
\\
\noindent\textbf{Example:} Let $\Delta$ be the boundary of the tetrahedron. This complex has $f$-vector $(1,4,6,4)$, $e$-vector $(-1,4,-6,4)$ and $h$-vector $(1,1,1,1)$. Note that the $e$-vector is an alternating version of the $f$-vector. This is no accident, as we will see in this section.
\vspace{1pc}
\\
\indent Fixing a $(d-1)$-dimensional simplicial complex $\Delta$, we denote the $e$-components, $f$-components, and $h$-components of $\Delta$ as $e_i(\Delta)$, $f_{i-1}(\Delta)$, and $h_i(\Delta)$, respectively. We then define the following three polynomials. The \textit{$f$-polynomial} is defined to be $$f_\Delta(t)=\sum_{i=0}^d f_{i-1}(\Delta) t^i,$$ the \textit{$e$-polynomial} is defined to be $$e_\Delta(t)=\sum_{i=0}^d e_i(\Delta) t^i,$$ and finally the \textit{$h$-polynomial} is defined to be $$h_\Delta(t)=\sum_{i=0}^d h_i(\Delta) t^i.$$

If the simplicial complex $\Delta$ is understood, then we sometimes omit the subscript. The relationship between the $f$-polynomial and $h$-polynomial is classically given by $$h_\Delta(t)=(1-t)^df_\Delta\left(\frac{t}{1-t}\right).$$ From equations (2.3) and (2.4), it follows that $e_\Delta(t)=f_\Delta(t-1)$, and hence $$h_\Delta(t)=(1-t)^d e_\Delta\left(\frac{1}{1-t} \right).$$

Note that $e_\Delta(0)=e_0=-\chi(\Delta)$ and $e_\Delta(1)=f_\Delta(0)=f_{-1}=1$. This implies that $$\sum_{i=1}^d e_i=1+\chi(\Delta)=\sum_{i=0}^{d-1} (-1)^i f_i=\tilde{\chi}(\Delta),$$ where $\tilde{\chi}(\Delta)$ is the ordinary topological Euler characteristic of $\Delta$ (ignoring the empty face). Combining this observation with the above example motivates the following definition.

\begin{defn} We say that a $(d-1)$-dimensional simplicial complex $\Delta$ has \textit{Property E} if for every $0\leq k\leq d$ we have that $e_k=(-1)^{d-k} f_{k-1}$. If these equalities are just required to hold for $1\leq k\leq d$, then we say that $\Delta$ has \textit{weak Property E}.
\end{defn}

\begin{remark*}
Note that, in terms of the $e$-polynomial, Property E just says that $e_\Delta(t)=(-1)^df_\Delta(-t)$, while weak Property E is equivalent to the equation $e_\Delta(t)+(-1)^{d+1}f_\Delta(-t)=e_0+(-1)^{d+1}$. Also note that Property E implies that $e_0=-\chi(\Delta)=(-1)^d$, which in turn implies that $\Delta$ has the topological Euler characteristic of a $(d-1)$-sphere. Hence Property E is very restrictive. We introduce the notion of weak Property E to allow for more simplicial complexes.
\end{remark*}
\vspace{1pc}
\noindent\textbf{Example:} Let $\Delta$ be a two-dimensional simplicial complex. Then the $e$-vector has the following formula in terms of the face numbers of $\Delta$:

$$\left(1-f_0+f_1-f_2,f_0-2f_1+3f_2,f_1-3f_2,f_2 \right).$$

If $\Delta$ satisfies Property E, then we obtain the following equations relating the face numbers of $\Delta$: $$f_0-f_1+f_2=2,$$ $$3f_2=2f_1.$$

These are familiar equations satisfied by the boundary of a simplicial polyhedron. The latter equation simply says that each edge of the simplicial polyhedron is contained in two triangles and that each triangle contains three edges. Thus, in general, one can think of the $e$-vector as measuring how much $\Delta$ deviates from satisfying the standard face equations of the boundary of a simplicial polytope.
\vspace{1pc}
\\
\indent We now prove the main theorem of this section, which says that (weak) Property E on the $e$-vector is equivalent to the (general) Dehn-Sommerville condition on the $h$-vector. Recall that if the $h$-vector satisfies the classical Dehn-Sommerville equations, then $h_k=h_{d-k}$ (so, in particular, $h$ is symmetric). The $h$-vector satisfies the general Dehn-Sommerville equations if
\begin{equation}
h_k-h_{d-k}=(-1)^k {d\choose k}\left(\tilde{\chi}(S^{d-1})-\tilde{\chi}(\Delta) \right),
\end{equation}
where $\tilde{\chi}(S^{d-1})=1+(-1)^{d-1}$ is just the Euler characteristic of the $(d-1)$-sphere.

\begin{remark*} The general Dehn-Sommerville equations are sometimes referred to as Klee's Dehn-Sommerville equations, as they were originally considered by V. Klee in \cite{Kle}.\end{remark*}

\begin{theorem}
A $(d-1)$-simplicial complex $\Delta$ has weak Property E if and only if for every $0\leq k\leq d$ we have that $$h_k-h_{d-k}=(-1)^k {d\choose k}\left(\tilde{\chi}(S^{d-1})-\tilde{\chi}(\Delta) \right).$$

In particular, $\Delta$ has Property E if and only if for every $0\leq k\leq d$ we have that $h_k=h_{d-k}$.
\end{theorem}

\begin{proof}
We restate the conditions of the theorem in terms of polynomials. Recall that a simplicial complex has weak Property E if and only if $$e_\Delta(t)+(-1)^{d+1}f_\Delta(-t)=e_\Delta(t)+(-1)^{d+1} e(1-t)=e_0+(-1)^{d+1}.$$

In terms of the $h$-polynomial, the generalized Dehn-Sommerville condition on the $h$-vector in the theorem is equivalent to $$h(t)-t^d h\left(\frac{1}{t}\right)=C\cdot K(t),$$

where $C=1-\tilde{\chi}(\Delta)+(-1)^{d+1}=e_0+(-1)^{d+1}$ and $$K(t)=\sum_{k=0}^d (-1)^k {d \choose k} t^k. $$

In terms of the $e$-polynomial, the $h$-polynomial equation can be rewritten as $$(1-t)^de\left(\frac{1}{1-t}\right)-(t-1)^d e\left(\frac{t}{t-1}\right)=C\cdot K(t).$$

Factoring $(1-t)^d$ from the left of the equation and moving it to the right-hand side, the above equation becomes:

$$e\left(\frac{1}{1-t}\right)+(-1)^{d+1} e\left(\frac{t}{t-1}\right)=C\cdot \frac{K(t)}{(1-t)^d}.$$

Making the substitution $t=1-\frac{1}{u}$, we obtain $$e(u)+(-1)^{d+1} e(1-u)=C\cdot u^d K\left(\frac{u-1}{u} \right).$$

We have that $u^d K\left(\frac{u-1}{u} \right)=1$, as $$u^d K\left(\frac{u-1}{u} \right)=\sum_{k=0}^d (-1)^k {d\choose k} \left(\frac{u-1}{u} \right)^k u^d=\sum_{k=0}^d {d\choose k} (1-u)^k u^{d-k}=1 .$$

Note that the right-most equality is an application of the Binomial Theorem. Thus our equation of interest finally becomes $$e(u)+(-1)^{d+1} e(1-u)=C=e_0+(-1)^{d+1}.$$

This computation shows that the general Dehn-Sommerville condition on the $h$-vector is equivalent to weak Property E, and completes the proof of the first part of the theorem.

The second part of the theorem now follows from the first. Note that if $\Delta$ has Property E, then it has weak Property E. Furthermore, Property E implies that $\tilde{\chi}(\Delta)=\tilde{\chi}(S^{d-1})$ (see the previous remark), and hence for every $0\leq k\leq d$ we have that $h_k=h_{d-k}$. If we assume that for every $0\leq k\leq d$ we have that $h_k=h_{d-k}$, then $\tilde{\chi}(\Delta)=\tilde{\chi}(S^{d-1})$ and $\Delta$ has weak Property E, and thus $\Delta$ has Property E.
\end{proof}

\begin{remark*}
An immediate consequence of the above theorem is that a Dehn-Sommerville space has zero topological Euler characteristic.
\end{remark*}

As an application, we will exploit Theorem 3 to provide an elementary proof of the general Dehn-Sommerville equations. But first we need to recall a definition and prove a lemma. For a simplex $\sigma\in\Delta$ we define the \emph{link of $\sigma$} to be the following subcomplex: $$\Lk\sigma:=\{\tau\in\Delta\mid \sigma\cup\tau\in\Delta \text{ and } \sigma\cap\tau=\emptyset\}.$$

The following lemma gives a local check to verify if a simplicial complex has weak Property E.

\begin{lemma}
Suppose that for every $v\in \text{vert}(\Delta)$ we have that $\Lk v$ has Property E. Then $$e_\Delta(t)+(-1)^{d+1}f_\Delta(-t)=e_0+(-1)^{d+1} .$$ In particular, $\Delta$ has weak Property E.
\end{lemma}
\begin{proof} Since every $j$-dimensional simplex of $\Delta$ has $j+1$ vertices, it follows that $$(j+1)f_j(\Delta)=\sum_{v\in \text{vert}(\Delta)} f_{j-1}(\Lk v).$$ Rewriting this in terms of the $f$-polynomial, we have that $$\frac{d}{dt} f_\Delta(t)=\sum_{v\in \text{vert}(\Delta)} f_{\Lk v}(t).$$

As $e_\Delta(t)=f_\Delta(t-1)$, it follows that the $e$-polynomial has the same property:

$$\frac{d}{dt}e_\Delta(t)=\frac{d}{dt} f_\Delta(t-1)=\sum_{v\in \text{vert}(\Delta)} f_{\Lk v}(t-1)=\sum_{v\in \text{vert}(\Delta)} e_{\Lk v}(t).$$

Now, suppose that for every $v\in \text{vert}(\Delta)$ we have that $\Lk v$ has Property E. In other words, $$e_{\Lk v}(t)=(-1)^{d-1} f_{\Lk v}(-t).$$

Hence $$\frac{d}{dt}e_\Delta(t)=\sum_{v\in \text{vert}(\Delta)} e_{\Lk v}(t)=\sum_{v\in \text{vert}(\Delta)} (-1)^{d-1} f_{\Lk v}(-t)=(-1)^d \frac{d}{dt} f_\Delta(-t).$$

This implies that $e_\Delta(t)=(-1)^d f_\Delta(-t)+C$. Plugging in $t=0$ to solve for the constant, we obtain $C=(-1)^{d+1}f_{-1}+e_0=(-1)^{d+1}-\chi(\Delta)$, and the result follows.

\end{proof}

We now need a few more definitions. A simplicial complex is \textit{pure} if every maximal face has the same dimension. A pure $(d-1)$-simplicial complex $\Delta$ is an \textit{Eulerian manifold} if for every simplex (except the empty simplex) $\sigma\in \Delta$ we have that $\tilde\chi(\Lk \sigma)=1+(-1)^{d-\dim\sigma}$. Note that this definition just requires that the link of every face (except the empty face) of $\Delta$ has the same Euler characteristic as a sphere of appropriate dimension. If an Eulerian manifold $\Delta$ additionally satisfies $\tilde\chi(\Delta)=1+(-1)^{d-1}$, then $\Delta$ is an \textit{Eulerian sphere}.

The following theorem provides a short proof of the fact that an Eulerian manifold satisfies weak Property E. Combined with Theorem 3, this gives an elementary independent proof of the (general) Dehn-Sommerville Equations. Furthermore, it provides a plethora of examples of complexes satisfying Property E and, in turn, complexes for which the exponential Hilbert series is explicitly computed. For example, it follows that if $\Delta$ is the boundary of a simplicial polytope or is a triangulation of your favorite odd-dimensional manifold, then $\Delta$ has Property E.

\begin{theorem}
If $\Delta$ is an Eulerian manifold then $\Delta$ has weak Property E. If $\Delta$ is further assumed to have odd dimension or is an Eulerian sphere, then $\Delta$ has Property E.
\end{theorem}
\begin{proof}
We begin by noting the standard fact that a complex $\Delta$ is an Eulerian manifold if and only if the link of every vertex is an Eulerian sphere. Hence, if we prove the theorem for Eulerian spheres, then the theorem follows for Eulerian manifolds by Lemma 4.

Suppose that $\Delta^{d-1}$ is a Eulerian sphere. We will show that $e_\Delta(t)=f(t-1)=(-1)^df_\Delta(-t)$. Given a simplex $\sigma\in\Delta$, let $|\sigma|$ denote the number of vertices in $\sigma$. We make the following two observations:

$$\sum_{\sigma\subset\tau}(-1)^{|\tau|}=(-1)^d \text{ and } (1-t)^{|\sigma|}=\sum_{\tau\subset\sigma} (-t)^{|\tau|}.$$

The first equation follows from the fact that $\Delta$ is a Eulerian sphere and the second equation is just the Binomial Theorem. We compute:

$$f(t-1)=\sum_{i=0}^d f_{i-1}(t-1)^i=\sum_{\sigma\in\Delta} (t-1)^{|\sigma|}=\sum_{\sigma\in\Delta}(-1)^{|\sigma|}(1-t)^{|\sigma|}.$$

We now apply the two observations above to the right-most sum:

$$f(t-1)=\sum_{\sigma\in\Delta}\sum_{\tau\subset\sigma} (-t)^{|\tau|}(-1)^{|\sigma|}=(-1)^d\sum_{\sigma\in\Delta}\sum_{\tau\subset\sigma} (-t)^{|\tau|}=(-1)^d\sum_{\tau\in\Delta}(-t)^{|\tau|}.$$

Note that the right-most sum is just equal to $(-1)^df(-t)$. This completes the proof for the case when $\Delta$ is a Eulerian sphere. For the last part of the assertion, note that if $\Delta$ is an odd-dimensional Eulerian manifold, then \cite[Lemma 17.3.3]{Dav} implies that $\tilde{\chi}(\Delta)=0$. So, an odd-dimensional Eulerian manifold $\Delta$ is an Eulerian sphere, and hence has Property E.
\end{proof}

\begin{remark*}
If $\Delta$ is Eulerian manifold, then by \cite[Theorem 1]{Aki}, $\tilde{\chi}(\Delta)=f_\Delta\left(-\frac{1}{2}\right)=e_\Delta\left(\frac{1}{2}\right)$. If $\Delta$ is odd-dimensional, then $\tilde{\chi}(\Delta)=0$. It follows that $t=\frac{1}{2}$ is a root of $e_\Delta(t)$, and hence $t=-\log 2$ is a root of the exponential Hilbert series.
\end{remark*}

The following example completely characterizes $1$-dimensional complexes with Property E. It also provides non-Eulerian examples satisfying Property E, thus showing that the converse of Theorem 5 is false.
\vspace{1pc}
\\
\noindent\textbf{Example:} Suppose that $\Delta$ is $1$-dimensional and connected. Then $e_0=-\chi(\Delta)=1-f_0+f_1$, $e_1=f_0-2f_1$ and $e_2=f_1$. If $\Delta$ has weak Property E, then it follows that $e_1=-f_0$, and therefore $f_0=f_1$. This means that $\chi(\Delta)=-1$, and hence $\tilde{\chi}(\Delta)=0$. So for $1$-dimensional $\Delta$, Property E and weak Property E are equivalent.

If $\tilde{\chi}(\Delta)=0$, then it is homotopic to a cycle. Any such $\Delta$ looks like a cycle with trees attached to its vertices, which clearly satisfies $f_0=f_1$. It follows that a connected $1$-dimensional $\Delta$ satisfies Property E if and only if it is a cycle with trees emanating from its vertices.

Consider such a $\Delta$ which is an empty triangle with an extra edge attached to one of its vertices. This $\Delta$ has Property E, but is not Eulerian, as the links of the vertices of the extra edge do not have the correct Euler characteristic.
\vspace{1pc}
\\
\indent The following proposition allows one to produce many more examples of complexes with Property E. In particular, it will allow us to exploit the previous example to produce non-Eulerian examples satisfying Property E in all dimensions.

\begin{prop}
Suppose that $\Delta=\Delta_1\ast\Delta_2$ is the join of two simplicial complexes $\Delta_1$ and $\Delta_2$ satisfying Property E. Then $\Delta$ has Property E.
\end{prop}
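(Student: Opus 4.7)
The plan is to work entirely at the level of the $e$-polynomial, using the reformulation of Property E given in the remark following the definition, namely
\[
e_\Delta(t) = (-1)^d f_\Delta(-t).
\]
Writing $\dim(\Delta_i) = d_i - 1$, so that $\Delta = \Delta_1 \ast \Delta_2$ has dimension $(d_1-1)+(d_2-1)+1 = d_1+d_2-1$ and hence $d = d_1+d_2$, the goal becomes to verify this polynomial identity for $\Delta$ given that it holds for $\Delta_1$ and $\Delta_2$. The natural way in is to show that the $e$-polynomial is multiplicative under the join operation.

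First I would prove multiplicativity of the $f$-polynomial under join. Every face of $\Delta_1 \ast \Delta_2$ decomposes uniquely as $\sigma_1 \cup \sigma_2$ with $\sigma_i \in \Delta_i$ (allowing either piece to be empty), so counting vertices gives the convolution
\[
f_{i-1}(\Delta) = \sum_{j=0}^{i} f_{j-1}(\Delta_1) \, f_{i-j-1}(\Delta_2),
\]
which is precisely the Cauchy-product identity
\[
f_\Delta(t) = f_{\Delta_1}(t) \cdot f_{\Delta_2}(t).
\]
Applying the relation $e_\Delta(t) = f_\Delta(t-1)$ established in Section 4 to both sides (and to each factor on the right) then transfers multiplicativity to the exponential side:
\[
e_\Delta(t) = f_{\Delta_1}(t-1)\, f_{\Delta_2}(t-1) = e_{\Delta_1}(t) \cdot e_{\Delta_2}(t).
\]

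Finally I would invoke Property E for each factor in the polynomial form $e_{\Delta_i}(t) = (-1)^{d_i} f_{\Delta_i}(-t)$ and multiply:
\[
e_\Delta(t) = (-1)^{d_1}(-1)^{d_2}\, f_{\Delta_1}(-t)\, f_{\Delta_2}(-t) = (-1)^{d}\, f_\Delta(-t),
\]
which is Property E for $\Delta$. The only step that requires any argument is the Cauchy-product identity for $f$-polynomials under join; once that is in hand, the rest is a short algebraic calculation using identities already present in the paper, so no serious obstacle is expected.
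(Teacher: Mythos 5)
Your proposal is correct and follows essentially the same route as the paper: multiplicativity of the $f$-polynomial under join, transferred to the $e$-polynomial via $e_\Delta(t)=f_\Delta(t-1)$, then multiplying the two instances of Property E in polynomial form. The only difference is that you justify the identity $f_\Delta(t)=f_{\Delta_1}(t)f_{\Delta_2}(t)$ with the face-decomposition convolution, which the paper simply asserts.
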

\begin{proof}
We have that $f_\Delta(t)=f_{\Delta_1}(t)f_{\Delta_2}(t)$. As $e_\Delta(t)=f_\Delta(t-1)$, it follows that $e_\Delta(t)=e_{\Delta_1}(t)e_{\Delta_2}(t)$. Suppose that $\Delta_1$ is $l$-dimensional and $\Delta_2$ is $m$-dimensional (so that $\Delta$ is $l+m+1$-dimensional). As both $\Delta_1$ and $\Delta_2$ have Property E, we have that $e_{\Delta_1}(t)=(-1)^{l+1}f_{\Delta_1}(-t)$ and $e_{\Delta_2}(t)=(-1)^{m+1}f_{\Delta_2}(-t)$. It follows that $e_\Delta(t)=(-1)^{l+m+2}f_{\Delta_1}(-t)f_{\Delta_2}(-t)=(-1)^{l+m+2}f_{\Delta}(-t)$, and therefore $\Delta$ has Property E.
\end{proof}

By taking iterated suspensions of the complexes in the previous example, the above proposition allows one to produce non-Eulerian complexes satisfying Property E in every dimension. Hence (via Theorem 3) there are many non-Eulerian complexes satisfying the classical Dehn-Sommerville relations. A characterization of simplicial complexes with (weak) Property E would be interesting.

We conclude with a generalization of Theorems 3 and 5. First, we introduce the notion of an Eulerian manifold with boundary. A pure $(d-1)$-simplicial complex $\Delta$ is an \textit{Eulerian manifold with boundary} $\partial \Delta$ if the following conditions hold:

\begin{description}
  \item[(i)] $\partial\Delta$ is a subcomplex of $\Delta$ and $\partial\Delta$ is itself a $(d-2)$-Eulerian manifold.
  \item[(ii)] Every simplex $\sigma\in\Delta$ not contained in $\partial\Delta$ has the property that $\tilde\chi(\Lk \sigma)=1+(-1)^{d+\dim\sigma}$.
  \item[(iii)] Every simplex (except the empty simplex) $\sigma\in \partial\Delta$ has the property that $\tilde\chi(\Lk \sigma)=1$.
\end{description}

We note that in condition (iii) we are taking the link of the simplex in the entirety of $\Delta$. This definition is modeled on triangulations of manifolds with boundary. While the concept is not entirely new, it has only been considered in \cite{CL} from the perspective of posets (there it is called a semi-Eulerian partially ordered set with boundary).

In \cite{NS}, the following version of the Dehn-Sommerville relations is proved for homology manifolds with boundary. However, the proof goes through line-by-line for Eulerian manifolds with boundary.

\begin{theorem}[{\cite{NS}[Theorem 3.1]}]
If $(d-1)$-simplicial complex $\Delta$ is an Eulerian manifold with boundary $\partial\Delta$ then for $0\leq k\leq d$ we have that $$h_k(\Delta)-h_{d-k}(\Delta)= {d\choose k}(-1)^{d-k}\tilde{\chi}(\Delta)+h_k(\partial\Delta)-h_{k-1}(\partial\Delta).$$
\end{theorem}

In terms of the $e$-vector, this reads as follows.

\begin{theorem}
The Dehn-Sommerville relations for Eulerian manifolds with boundary are equivalent to the following condition on the $e$-vector:

$$e_k(\Delta)=(-1)^{d-k} f_{k-1}(\Delta)+e_k(\partial\Delta)\text{ for } 1\leq k\leq d.$$
\end{theorem}
\begin{proof}
The proof is in the same vein as of that of Theorem 3. In terms of $h$-polynomials, the Dehn-Sommerville relations of Theorem 7 can be rewritten as:

$$h_\Delta(t)-t^d h_\Delta\left(\frac{1}{t}\right)=(-1)^d\tilde{\chi}(\Delta)K(t)+(1-t)h_{\partial\Delta}(t),$$

where $$K(t)=\sum_{k=0}^d (-1)^k {d \choose k} t^k. $$

Equivalently,

$$e_\Delta\left(\frac{1}{1-t}\right)+(-1)^{d+1} e_\Delta\left(\frac{t}{t-1}\right)=(-1)^d\tilde{\chi}(\Delta)\cdot \frac{K(t)}{(1-t)^d}+e_{\partial\Delta}\left(\frac{1}{1-t}\right).$$

Making the substitution $t=1-\frac{1}{u}$, we obtain $$e_\Delta(u)=(-1)^d\tilde{\chi}(\Delta)+(-1)^{d} e_\Delta(1-u)+e_{\partial\Delta}(u).$$

The result follows by noting that $e_\Delta(1-u)=f_\Delta(-u)$.

\end{proof}

\section{Hierarchical log-linear models}

As an application of the $e$-vector, we consider hierarchical log-linear models from the point of view of algebraic statistics. To that end, let $X=(X_1,\dots,X_m)$ be a discrete random vector, and assume that each $X_i$ has state space $[r_i]$. Let
\begin{center}
$\mathcal{R}:=\displaystyle\prod_{i=1}^m[r_i]$
\end{center}
denote the joint state space of the random vector $X$.

We use the convention in \cite{Sull} for writing subindices: let $i=(i_1,\dots,i_m)\in\mathcal{R}$ and $\sigma=\{f_1,f_2,\dots\}\subseteq[m]$. Then we define
\begin{center}
$i_\sigma:=(i_{f_1},i_{f_2},\dots)$.
\end{center}
For each $\sigma\subseteq[m]$, the random subvector $X_\sigma:=(X_f)_{f\in \sigma}$ has state space $\mathcal{R}_\sigma=\displaystyle\prod_{f\in \sigma}[r_f]$. With this notation set, we define a \textit{hierarchical log-linear model} as follows (this is Definition 9.3.2 in \cite{Sull}).

\begin{defn}
Let $\Delta\subseteq 2^{[m]}$ be a simplicial complex and let $r_1,\dots,r_m\in\mathbb{N}$. For each facet $\sigma\in\Delta$, let $\theta^{(\sigma)}_{i_\sigma}$ be a set of $\#\mathcal{R}_F$ \textit{positive} parameters. Then the \textbf{hierarchical log-linear model} associated with $\Delta$ is the set of all probability distributions
\begin{center}
$\mathcal{M}_\Delta:=\left\{p\in\Delta_{\mathcal{R}-1} : p_i=\displaystyle\frac{1}{Z(\theta)}\prod_{\sigma\subset\Delta}\theta^{(\sigma)}_{i_\sigma}, \forall i\in\mathcal{R}\right\}$,
\end{center}
where $Z(\theta)$ is the normalizing constant
\begin{center}
$Z(\theta)=\displaystyle\sum_{i\in\mathcal{R}}\prod_{\sigma\subset\Delta}\theta^{(\sigma)}_{i_\sigma}$.
\end{center}
\end{defn}

Log-linear models in general can be described in terms of integer matrices that record the non-trivial interactions between the variables. We will denote by $A_\Delta$ the integer matrix that describes the hierarchical model. Note that $A_\Delta$ is a $\{0,1\}$-matrix, as monomials corresponding to facets must be squarefree, i.e. variables only interact with degree 1. The rank of $A_\Delta$ was first computed in \cite{HS}. It encodes the dimension (and hence the number of degrees of freedom in the probability simplex) of the model.

\begin{prop}[Ho\c{s}ten-Sullivant]
Let $\Delta$ be a simplicial complex on $[m]$, and $r_1,\dots,r_m\in\mathbb{N}$. The rank of the matrix $A_\Delta$ associated to these parameters is
\begin{center}
$\displaystyle\sum_{F\in\Delta}\prod_{f\in F}(r_f-1)$,
\end{center}
where the sum runs over all faces of $\Delta$. The dimension of the associated hierarchical model $\mathcal{M}_\Delta$ is one less than the rank of $A_\Delta$.
\end{prop}

The original proof of this proposition was done by studying the kernel of $A_\Delta$. In other words, the proof is based on the structure of the matrix as a linear map.

\begin{prop}
Let $X=(X_1,\dots,X_m)$ be a random vector with $X_i$ having $r_i$ possible outcomes. Let $\Delta$ be an abstract simplicial complex and let $\mathcal{M}_{A_{\Delta}}$ be the hierarchical log-linear for $X$ on $\Delta$. Then the rank of the matrix $A_\Delta$ of sufficient statistics is
\begin{center}
$E(\Delta;\log(r_1),\dots,\log(r_m))$.
\end{center}
\end{prop}

\begin{proof}
The proof of this result is really just a combination of two previous theorems. Recall that,
\begin{equation}
E(\Delta;\textbf{x})=\displaystyle\sum_{\sigma\subseteq\Delta}\prod_{i\in \sigma}(e^{x_i}-1).
\end{equation}
We also have, from \cite{Sull},
\begin{equation}
\text{rank}(A_\Delta)=\displaystyle\sum_{\sigma\in\Delta}\prod_{i\in \sigma}(r_i-1).
\end{equation}
Then substituting $x_i\mapsto\log(r_i)$ into (4.1) yields (4.2).
\end{proof}

\begin{prop}
Assume that $r_1=r_2=\dots=r_m=r$. Then the rank of $A_\Delta$ is
\begin{center}
$E(\Delta;\log(r))=e_0+e_1r+\dots+e_dr^d$,
\end{center}
where $d=\dim(\Delta)$.
\end{prop}

\begin{proof}
Let $r_1=r_2=\dots=r_m=r$. Then,
\begin{center}
$\text{rank}(A_\Delta)=E(\Delta;\log(r),\dots,\log(r))$.
\end{center}
Plugging $\log(r)$ in for each variable in Theorem 2 yields
\begin{center}
$E(\Delta;\log(r),\dots,\log(r))=\displaystyle\sum_{\sigma\subset\Delta}\prod_{i\in \sigma}(e^{\log(r)}-1)=\sum_{\sigma\subset\Delta}\prod_{i\in \sigma}(r-1)$.
\end{center}
Expanding the right-hand side over the product yields
\begin{center}
$\displaystyle\sum_{\sigma\subset\Delta}(r-1)^{\dim(\sigma)+1}=E(\Delta;\log(r))$.
\end{center}
\end{proof}

Proposition 11 states that, as long as each random variable in $X$ has the same number of outcomes, the dimension is computed via the $e$-vector of $\Delta$.

As a quick application of Proposition 11, we consider cyclic models. In a \textit{cyclic} model on $[m]$, the maximal faces of $\Delta$ are $\{1,2\},\{2,3\}, \dots, \{(m-1),m\}, \{m,1\}$. Thus, the face vector of $\Delta$ is $(1,m,m)$, as $\Delta$ is a graph with $m$ vertices and $m$ edges. The $e$-vector of $\Delta$ is easily computed to be $(1,-m,m)$. Note that the cyclic model follows the Dehn-Sommerville relations, as can be seen in the $e$-vector.

\begin{cor}
Let $\Delta$ be the simplicial complex of the cyclic model, and assume $r_1=r_2=\dots=r_m=r$. Then
\begin{center}
$\text{rank}(A_\Delta)=1-mr+mr^2$.
\end{center}
\end{cor}

As a final example, let $\Delta$ be the boundary of the saturated model. The saturated model is the model whose complex is the power set on $[m]$. Therefore $\Delta$ is the model that includes all interaction factors except for the interaction term between all variables. $\Delta$ has the same number of $i$-faces as the saturated model, except $\Delta$ has no $m-1$ face. Since $\Delta$ is the boundary of a simplicial polytope, it satisfies the Dehn-Sommerville relations. Therefore, we have
\begin{center}
$e_k=(-1)^{k+1}f_{k-1}$,
\end{center}
for $k=0,\dots,m-1$, which yields
\begin{center}
$\text{rank}(A_\Delta)=\displaystyle\sum_{i=0}^{m-1}(-1)^{i+1}{m\choose i}r^i$,
\end{center}
assuming each random vector has $r$ outcomes.

\begin{bibdiv}
\begin{biblist}

\bib{Aki}{article}{
   author={Akita, T.},
   title={A formula for the Euler characteristics of even dimensional triangulated manifolds},
   journal={Proc. Amer. Math. Soc.},
   volume={136},
   date={2008},
   number={7},
   issn={2571–2573},
}

\bib{BL80}{article}{
   author={Billera, Louis J.},
   author={Lee, Carl W.},
   title={Sufficiency of McMullen's conditions for $f$-vectors of simplicial
   polytopes},
   journal={Bull. Amer. Math. Soc. (N.S.)},
   volume={2},
   date={1980},
   number={1},
   pages={181--185},
   issn={0273-0979},
}

\bib{BL81}{article}{
   author={Billera, Louis J.},
   author={Lee, Carl W.},
   title={A proof of the sufficiency of McMullen's conditions for
   $f$-vectors of simplicial convex polytopes},
   journal={J. Combin. Theory Ser. A},
   volume={31},
   date={1981},
   number={3},
   pages={237--255},
   issn={0097-3165},
}

\bib{CL}{article}{
   author={Chen, Beifang},
   author={Mankwong, Lau},
   title={On semi-Eulerian partially ordered sets with boundary},
   journal={European Journal of Combinatorics},
   volume={24},
   date={2003},
   number={8},
   pages={955--968},
   issn={0195-6698},
}

\bib{Dav}{book}{
	author = {Davis, Michael W.},
	title = {The geometry and topology of Coxeter groups},
	series = {London Mathematical Society Monographs Series}, publisher={Princeton University Press}, address={Princeton, NJ},
	date = {2008},
	volume = {32}, ISBN={978-0-691-13138-2; 0-691-13138-4},
	review = {\MR{2360474 (2008k:20091)}},
}

\bib{HS}{article}{
   author={Ho\c{s}ten, Serkan},
   author={Sullivant, Seth},
   title={Gr\"{o}bner bases and polyhedral geometry of reducible and cyclic
   models},
   journal={J. Combin. Theory Ser. A},
   volume={100},
   date={2002},
   number={2},
   pages={277--301},
}

\bib{Joh}{article}{
   author={Johnson, Wayne A.},
   title={Exponential Hilbert series and the Stirling numbers of the second
   kind},
   journal={Discrete Math.},
   volume={341},
   date={2018},
   number={5},
   pages={1237--1243},
   issn={0012-365X},
}

\bib{Kle}{article}{
   author={Klee, V.},
   title={A combinatorial analogue of Poincare’s duality
theorem},
   journal={Canadian J. Math.},
   volume={},
   date={1964},
   number={16},
   pages={517--531},
   issn={},
}

\bib{KN}{article}{
	author={Klee, S.},
	author={Novik, I.},
	title={Face enumeration on simplicial complexes},
	conference={
      title={Recent trends in combinatorics},
   },
   book={
      series={IMA Vol. Math. Appl.},
      volume={159},
      publisher={Springer, [Cham]},
   },
   date={2016},
   pages={653--686},
}

\bib{MiS}{book}{
   author={Miller, Ezra},
   author={Sturmfels, Bernd},
   title={Combinatorial commutative algebra},
   series={Graduate Texts in Mathematics},
   volume={227},
   publisher={Springer-Verlag, New York},
   date={2005},
   pages={xiv+417},
   isbn={0-387-22356-8},
}

\bib{NS}{article}{
   author={Novik, Isabella},
   author={Swartz, Ed},
   title={Applications of Klee's Dehn-Sommerville Relations},
   journal={Discrete and Computational Geometry},
   volume={42},
   date={2009},
   number={2},
   pages={261--276},
   issn={1432-0444},
}

\bib{Stan75}{article}{
   author={Stanley, Richard P.},
   title={The upper bound conjecture and Cohen-Macaulay rings},
   journal={Studies in Appl. Math.},
   volume={54},
   date={1975},
   number={2},
   pages={135--142},
}

\bib{Stan80}{article}{
   author={Stanley, Richard P.},
   title={The number of faces of a simplicial convex polytope},
   journal={Adv. in Math.},
   volume={35},
   date={1980},
   number={3},
   pages={236--238},
   issn={0001-8708},
}

\bib{Stan96}{book}{
   author={Stanley, Richard P.},
   title={Combinatorics and commutative algebra},
   series={Progress in Mathematics},
   volume={41},
   edition={2},
   publisher={Birkh\"auser Boston, Inc., Boston, MA},
   date={1996},
   pages={x+164},
   isbn={0-8176-3836-9},
}

\bib{Sull}{book}{
   author={Sullivant, Seth},
   title={Algebraic Statistics},
   series={Graduate Studies in Mathematics},
   volume={194},
   publisher={American Mathematical Society, Rhode Island},
   date={2018},
}

\end{biblist}
\end{bibdiv}

\end{document}